\newtheorem{theorem}{Theorem}[section]
\newtheorem{lemma}[theorem]{Lemma}
\numberwithin{equation}{section}
\numberwithin{table}{section}
\newcommand{\ssection}[1]{
     \section{\normalsize\bf #1}}
\begin{document}

\title {\large\sc Solving Partial Differential Equations on Closed Surfaces\\
with Planar Cartesian Grids
 }

\author{
{\normalsize J. Thomas Beale} \\
{\normalsize{\em Department of Mathematics, Duke University, Box 90320}}\\
{\normalsize{\em Durham, North Carolina 27708, U.S.A.}}\\
{\normalsize{\tt beale@math.duke.edu}}
}

\date{}

\maketitle

\newcommand{\beq}{\begin{equation}}
\newcommand{\eeq}{\end{equation}}
\newcommand\p{\,+\,}
\newcommand\lee{\,\leq\,}
\newcommand\gee{\,\geq\,}
\newcommand\m{\,-\,}
\newcommand\eq{\,=\,}
\newcommand{\eps}{\varepsilon}
\newcommand{\sig}{\sigma}
\newcommand{\pa}{\partial}
\newcommand{\lilhalf}{{\textstyle \frac12}}
\newcommand{\NN}{{\mathcal N}}
\newcommand{\R}{{\mathbb R}}
\newcommand{\Z}{{\mathbb Z}}

\begin{abstract}
We present a general purpose method for solving partial differential equations on a closed surface, based on a technique for discretizing the surface introduced
by Wenjun Ying and Wei--Cheng Wang [J.\ Comput.\ Phys. 252 (2013), pp. 606–-624]
which uses projections on coordinate planes.  Assuming it is given as a
level set, the surface
is represented by a set of points at which it intersects the intervals
between grid points in a three-dimensional grid.
They are designated as primary or secondary.  Discrete functions
on the surface have independent values at primary points, with values at secondary 
points determined by an equilibration process.  Each primary point and
its neighbors have projections to regular grid points in a coordinate plane where
the equilibration is done and finite differences are computed.
The solution of a p.d.e. can be reduced to standard methods on Cartesian grids in the coordinate planes, with the equilibration allowing seamless transition from one system to another.  We observe second order accuracy in examples with a variety of equations, including surface diffusion determined by the Laplace-Beltrami operator and the shallow water equations on a sphere.

\medskip

\noindent Subject Classification:  65M06, 65M50, 58J35, 58J45, 35Q86

\noindent Keywords: Cartesian grids, surface diffusion, Laplace-Beltrami operator, shallow water equations, closed surfaces, partial differential equations, finite difference methods

\end{abstract} 

\ssection {\bf Introduction} 
We present a general purpose method for solving partial differential equations on a closed surface, based on a technique for discretizing the surface introduced by Wenjun Ying and Wei-Cheng Wang \cite{YW13}, which uses projections on coordinate planes. Assuming it is given as a level set, the surface is 
represented by a set of points at which it intersects the intervals between grid points in a three-dimensional grid.  We will call these {\it cut points}.  They are designated as primary or secondary.  Discrete functions on the surface have independent values at primary points, and values at secondary points are determined from those by an equilibration process.  Each primary point and its neighbors have projections to regular grid points in a coordinate plane where computations can be done, including the equilibration and calculation of finite differences.  The solution of 
a p.d.e. can be reduced to standard methods on Cartesian grids in the coordinate planes, with the equilibration allowing seamless transition from one coordinate system to another.  We observe second order accuracy in examples with a variety of equations.

To determine the primary points, we find the closest grid point to each cut point.  Among those cut points which share the same closest grid point, we designate the one closest to the grid point as primary and any others as secondary.  (See Figure 1.)  This choice has the effect of determining which coordinate plane should be used.  If, for example, a primary point has the form $(ih,jh,z(ih,jh))$, then
the component of the normal vector in the $z$-direction is the largest in magnitude, within tolerance $O(h)$; see 
Lemma 2.1 below.  Thus the surface has the form $z = z(x,y)$ near this primary point, and calculations are done on regular grid points in the $xy$-plane using projections of neighboring cut points on vertical grid intervals.  However, some of these neighbors may be secondary points.  The primary points are distributed quasi-uniformly; see Lemma 2.2.  The procedure for selecting primary and secondary points and listing neighbors and coefficients for finite differences and equilibration is explained in the next section.


\begin{figure}[t!]
\centering
\includegraphics[height=2.1in]{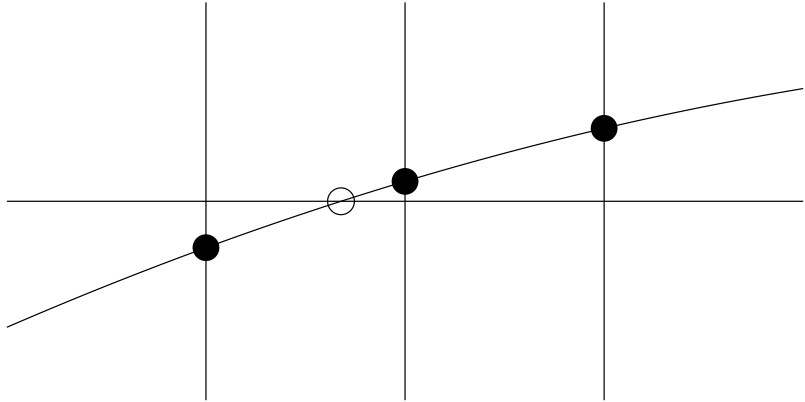}
\caption{Primary (filled) and secondary (unfilled) cut points for a planar curve.}
\end{figure}

To solve partial differential equations on a surface using this approach we need to write the equations in each of three coordinate systems: 
(1) $(y,z)$, with $x = x(y,z)$; (2) $(z,x)$, with $y = y(z,x)$; and
(3) $(x,y)$, with $z = z(x,y)$.
To perform an explicit update in a time-dependent p.d.e.,
we update the unknown at each primary point in the appropriate
coordinate system.  We then equilibrate to obtain the updated
values at the secondary points.  The examples presented here illustrate how
standard differential operators can be expressed in the coordinates, including
the Laplace-Beltrami operator, the surface gradient and surface divergence.
Only basic differential geometry is needed for this special choice of coordinates.  
We begin in Section 3 with a diffusion equation or a Poisson equation on a surface in which the differential operator is the Laplace-Beltrami operator.  The discrete LB operator has a nine-point stencil in the coordinate plane.  We discuss
the spectrum and resolvent.

In Section 4 we apply this method to a linear advection equation and to the shallow water equations on a sphere.  Formulas are obtained in the coordinate systems for
transport by a tangential velocity field, surface gradient and divergence, and
the material or substantial derivative,
writing the tangential velocity as a three-vector.
The formulas are not drastically different from those in the plane.
We solve the equations for a standard test problem with a version of the
Lax-Wendroff method.  Nonlinear hyperbolic equations such as the shallow water equations are difficult to solve accurately and require more careful methods;
here we have only the limited goal
of establishing the feasibility of the present approach for such problems.

It is often desirable to monitor integrals of evolving quantities, such as
conserved mass.  In the present setting surface integrals can be computed
in a natural way introduced in
\cite{Wilson} and \cite{byw}, using the cut points as quadrature points.
This rule is summarized in Appendix B.

Often numerical methods for p.d.e.'s on surfaces use triangulation and finite elements \cite{dziuk}.  Other methods use extension of the p.d.e. to a neighborhood of the surface \cite{varimpl,pnas05}.  The closest point method \cite{cpell,closepnas,closeimpl,closemove,close08} uses an extension such that e.g. the LB operator becomes the usual Laplacian;
it can apply to point clouds and to moving surfaces.  
In \cite{vv} a variational principle on the surface is extended to a neighborhood and an extended p.d.e. is derived.
The method of \cite{llz,zhao13,zhao18} is also suitable for point clouds; it uses a local coordinate grid near each point to find the surface and derivatives of the unknown using least squares.  Radial basis functions have also been used to discretize the LB operator \cite{rbffogel}.

The shallow water equations are used as a depth-averaged model of global atmospheric motion.  Because of their fundamental importance, a large amount of work has been devoted to their accurate solution using a variety of approaches.  Reviews with emphasis on gridding include \cite{staniforth,wmrev}.  An approach using
Cartesian grids on the sphere was given in \cite{cartsph}.  The velocity is
treated as a three-vector in some work including \cite{logrect,hesthaven,cartsph}. 
Riemann solvers were used e.g. in \cite{logrect,callevrev,ullrich}.  Radial basis functions are also used \cite{fornfly}. 

The present method is quite simple and direct.  Provided the surface is fairly smooth and known as a level set, the solution of a p.d.e. is performed with conventional finite differences on regular two-dimensional grids without boundary conditions.  Variables do not need to be extended beyond the surface.  In the applications given here, the projected problems are not very different from standard ones in the plane.  It appears that familiar numerical methods in the plane can be used on surfaces when combined with this discretization.  Possible further applications are discussed briefly in Section 5.  
However, this approach could not easily be adapted to a surface given as a point cloud, as in \cite{zhao13,closepnas,zhao18}, or a surface with varying spatial scales, as in \cite{zhao13,zhao18}.  The surface discretization used here was introduced in \cite{YW13} in order to solve boundary value problems in the integral equation formulation, replacing the computation of boundary integrals with a finite difference method for equivalent interface problems.

Although some analysis given here supports the validity of these methods, it seems difficult to prove convergence.  The equilibration appears to obstruct the use of standard arguments based on the maximum principle or summation by parts.  We hope this challenge can be met in the future.

We describe the surface discretization in detail in Section 2.  In Section 3 we formulate the discrete Laplace-Beltrami operator and apply it to a diffusion equation on various surfaces as well as the corresponding Poisson equation.  We compute the lowest eigenvalues of the discrete LB operator on the sphere and present some partial information about the resolvent.  In Section 4 we first solve a linear advection equation on a sphere with a known exact solution.  We then express several differential operators in the coordinates and formulate the shallow water equations on a sphere.  We compute the solution for a test problem from \cite{testset}.  We conclude with some discussion in Section 5 of possible further work.  In Appendix A it is proved that the discrete LB operator has positive resolvent for the special case of a curve in $\R^2$.  The
quadrature rule for surface integrals is given in Appendix B.  Calculations were done in Matlab.

\ssection { Surface discretization}  
We outline the procedure for discretizing the surface, give some details, and then explain the equilibration process.  (Cf. \cite{YW13}, Sections 3--5.)  We assume the surface has the form $\phi(x,y,z) = 0$ with the level set function $\phi$ known at least at the grid points in $\R^3$.  We assume $\phi$ is $C^2$ and $|\nabla\phi| \geq c_0$ on the surface for some $c_0 > 0$.
The steps are these:

1. Choose a three-dimensional grid with size $h$ covering the surface.  Label each grid point as inside or outside the surface according to the sign of $\phi$.

2. Find the grid intervals, e.g. from $(ih,jh,kh)$ to $(ih,jh,(k+1)h)$,
 with one grid point inside and one outside.

3. Find a cut point on the surface in each interval found in step 2,
or in a restricted subset of intervals.
Form sets of cut points 
$\Gamma_\nu$,  $\nu = 1,2,3$, where those in $\Gamma_3$ have the form
$(ih,jh,z(ih,jh))$ etc.

4. Assign to each cut point the closest grid point.

5. For each grid point, designate the closest cut point assigned to it in step 4, if any, as primary and any others as secondary.  Assign to each secondary point its associated primary point.  (See Fig. 1 and \cite{YW13}, Figs. 2 and 3.)

6. For each primary point in $\Gamma_\nu$ list the neighboring
cut points in $\Gamma_\nu$, some of which may be secondary.  E.g., if
$(ih,jh,z(ih,jh)) \in \Gamma_3$, include $(i'h,j'h,z(i'h,j'h))$
with $|i'-i|\leq 1$ and $|j'-j|\leq 1$.  

7. For each secondary point, find (at least) three points and coefficients needed for quadratic interpolation.  The center point is the associated primary point, and the other two are its neighbors in the same set $\Gamma_\nu$.  E.g., if $\bf s$ is a secondary point in $\Gamma_1$ with the form ${\bf s} = (x,jh,kh)$, with closest grid point $(ih,jh,kh)$, and $(ih,jh,z(ih,jh)) \in \Gamma_3$ is the associated primary point, the two neighbors are
$(ih\pm h,jh,z^\pm)$.  The coefficients are in (2.1) below.


In step 1, if a grid point happens to be on the surface, it can be assigned inside or outside.  In step 3, we do not need to find all secondary points, as explained below.
In step 3, if the cut point is a grid point, we need more information to determine which set $\Gamma_\nu$ it should belong to.  We could do this by determining the largest component of the normal vector
${\bf n} = \pm \nabla\phi/|\nabla\phi|$ or simply match with a neighboring primary point which is not a grid point. In steps 4 and 5, in case of equal distances we can choose arbitrarily.

It is not necessary, and could be difficult, to find all secondary points.  To avoid this, we can choose $\eta < 1/\sqrt{3}$ and find only those cut points in $\Gamma_\nu$ such that $|n_\nu| \geq \eta$, where $n_\nu$ is the $\nu$-component of the unit normal to the surface. 
The cut points omitted in this way are not needed as neighbors of the primary points.  We can reject these points by excluding intervals in step 2 for
which $|n_\nu| < \eta$ at an endpoint.
 The admissible cut points are well separated:  If $h$ is small enough,
depending on the first two derivatives of the surface, a grid interval can have at most one admissible cut point.  The admissible points can be found by a simple line search.  These facts were shown in \cite{Wilson}, pp. 11-18.  

A discrete function on the surface will have independent values at the primary points, with values at the secondary points determined by those.  We now explain the equilibration process that produces the values at the secondary points.
Suppose as in step 7 that 
${\bf s} \in \Gamma_1$ is a secondary point of the form ${\bf s} = (x,jh,kh)$, with
closest grid point $(ih,jh,kh)$, so that $x = ih + \theta h$ with $|\theta| \leq \lilhalf$.   If the associated primary point is ${\bf p} \in \Gamma_3$, then
${\bf p} = (ih,jh,z)$ for some $z$.  The function value at ${\bf s}$ should agree with that interpolated from values at cut points ${\bf q^\pm} = (ih\pm h,jh,z^\pm) \in \Gamma_3$ and $\bf p$.  Given values $u({\bf p}), u({\bf q^\pm})$ we require
$u({\bf s})$ to be the quadratic interpolation
\beq u({\bf s}) = \lilhalf(-\theta + \theta^2)u({\bf q^-}) 
    + (1 - \theta^2)u({\bf p}) + \lilhalf(\theta + \theta^2)u({\bf q^+}) \eeq
However,
since either or both of ${\bf q^\pm}$ may be secondary, we cannot simply interpolate secondary values from primary values.  Instead, as in \cite{YW13}, Sec. 5.1,  we solve a system of equations so that conditions such as (2.1) hold for all secondary points.  The system of
equations has the form 
\beq u^s = \Pi_{sp}u^p + \Pi_{ss}u^s  \eeq
where $u^s$ and $u^p$ are column vectors of values on secondary and primary points, resp. and $\Pi_{sp}$, $\Pi_{ss}$ are matrices.  The $i$th row of (2.2) gives the equation (2.1) for the $i$th secondary point.
Each row of $\Pi_{ss}$ has at most two nonzero entries with absolute sum at most
$\lilhalf(|\theta| + \theta^2) + \lilhalf(|\theta| - \theta^2) = |\theta| \leq \lilhalf$.  Thus $I - \Pi_{ss}$ is strictly diagonally dominant and invertible.  In particular equation (2.2)
can be solved by the iteration
\beq (u^s)^{n+1} = \Pi_{sp}u^p + \Pi_{ss}(u^s)^n  \eeq
but in this work we
invert the sparse matrix $I - \Pi_{ss}$, obtaining
\beq u^s = (I - \Pi_{ss})^{-1}\Pi_{sp}u^p  \eeq

\medskip

The following two lemmas make precise the facts that the normal to the surface at a primary point in $\Gamma_3$ is largely in the $z$-direction, with $z = z(x,y)$ nearby on the surface, and the primary points are well distributed.

\begin{lemma}
If ${\bf p} \in \Gamma_3$ is a primary point and the normal vector at $\bf p$ is $\bf n(p)$,
then $|n_\nu({\bf p})| \leq (1 + Ch)|n_3({\bf p})|$ for $\nu = 1,2$, where $C$ depends on the first two
derivatives of $\phi$.  On the surface near ${\bf p}$, $z$ is a function of $(x,y)$
with $|z_x|, |z_y| \leq (1 + Ch)$, and similarly for $\Gamma_1$, $\Gamma_2$.
\end{lemma}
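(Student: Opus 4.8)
The plan is to reduce the graph statement to the bound on the normal components and then to prove that bound by a geometric comparison that exploits the defining property of a primary point. For the reduction, once we know $|n_\nu(\mathbf p)|\le (1+Ch)|n_3(\mathbf p)|$ for $\nu=1,2$, the relation $n_1^2+n_2^2+n_3^2=1$ forces $|n_3(\mathbf p)|$ to be bounded below (near $1/\sqrt3$), so $\phi_z\ne 0$ at $\mathbf p$ and the implicit function theorem represents the surface locally as $z=z(x,y)$. Differentiating $\phi(x,y,z(x,y))=0$ gives $z_x=-\phi_x/\phi_z=-n_1/n_3$ and $z_y=-\phi_y/\phi_z=-n_2/n_3$, so the gradient bounds $|z_x|,|z_y|\le 1+Ch$ are exactly the normal-component bounds. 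Thus everything comes down to the first assertion.

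For that assertion I would first record a local tangent-plane estimate. Since $\phi\in C^2$ and $|\nabla\phi|\ge c_0$, Taylor expansion of $\phi$ about $\mathbf p$ shows that for any $\mathbf v$ with $\mathbf p+\mathbf v$ on the surface,
\[
|\,\mathbf n(\mathbf p)\cdot\mathbf v\,|\;\le\;\tfrac12 K\,|\mathbf v|^2,\qquad K:=\sup|\nabla^2\phi|/c_0,
\]
so the surface deviates from its tangent plane by at most a curvature constant times the \emph{square} of the distance to $\mathbf p$. Next I would reformulate what ``primary'' gives. Writing $\mathbf g$ for the closest grid point to $\mathbf p$, the fact that $\mathbf p=(ih,jh,z)\in\Gamma_3$ means $\mathbf g=(ih,jh,kh)$ and $\mathbf p=\mathbf g+\delta e_3$ with $|\delta|\le h/2$, where $e_1,e_3$ are coordinate unit vectors. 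Because distinct grid points are at distance $\ge h$, any cut point within distance less than $h/2$ of $\mathbf g$ has $\mathbf g$ as its unique closest grid point and is therefore assigned to $\mathbf g$; hence a primary point is the closest cut point to $\mathbf g$, and it suffices to exhibit, whenever the normal bound fails, a genuine cut point strictly nearer to $\mathbf g$ than $\mathbf p$.

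The heart of the argument is this comparison. Suppose the bound fails; since permuting $x$ and $y$ is harmless I may assume $|n_1(\mathbf p)|$ is the largest component, so $|n_1|\ge 1/\sqrt3$, with $|n_1|>(1+Ch)|n_3|$. Along the $x$-grid line through $\mathbf g$, i.e. $\mathbf x=\mathbf g+\xi e_1$, the tangent-plane estimate with $\mathbf v=\xi e_1-\delta e_3$ gives $|n_1\xi-n_3\delta|\le\tfrac12 K(\xi^2+\delta^2)$, so the surface meets this line at an offset $\xi$ with
\[
|\xi|\;\le\;\frac{|n_3|}{|n_1|}\,|\delta|\;+\;\frac{K}{2|n_1|}\,(\xi^2+\delta^2).
\]
A one-step bootstrap (using $|\delta|\le h/2$ and $|n_1|\ge 1/\sqrt3$) gives $|\xi|\le 2|\delta|$, hence $\xi^2+\delta^2\le 5\delta^2$, whence $|\xi|\le|\delta|\big(|n_3|/|n_1|+C_4|\delta|\big)$ for an explicit $C_4(K)$. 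The transversality $|\nabla\phi\cdot e_1|=|\nabla\phi|\,|n_1|\ge c_0/\sqrt3>0$ makes this crossing a genuine cut point $\mathbf q$ on the grid interval adjacent to $\mathbf g$, and since $|\xi|<|\delta|\le h/2$ it is assigned to $\mathbf g$. When $|n_1|>(1+Ch)|n_3|$ we have $|n_3|/|n_1|<1/(1+Ch)$, and for $C$ a suitable multiple of $K$ the displayed inequality yields $|\xi|<|\delta|$, so $\mathbf q$ is strictly closer to $\mathbf g$ than $\mathbf p$, contradicting that $\mathbf p$ is primary. This proves $|n_1|\le(1+Ch)|n_3|$, and since $|n_2|\le|n_1|$ under this normalization the bound holds for $\nu=2$ as well.

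The step I expect to be the main obstacle is controlling the curvature correction in the last display uniformly in $\delta$, in particular for cut points very close to $\mathbf g$ (small $\delta$), which correspond to nearly tangential intersections of the vertical grid line with the surface. The crude bound $\xi^2+\delta^2=O(h^2)$ would drown the linear separation $\big(1-|n_3|/|n_1|\big)|\delta|$ and break the argument. The resolution, which the write-up must make carefully, is that the error scales with the square of the distance to $\mathbf p$ rather than with $h^2$: as $|\xi|,|\delta|$ are both $O(|\delta|)$, the correction is $O(\delta^2)$ and is dominated by the linear term precisely when $|\delta|\lesssim h$. I would also treat separately the degenerate case in which the constructed crossing coincides with $\mathbf g$, i.e. a grid point lies on the surface, which the discretization already handles as a special case.
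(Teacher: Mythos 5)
Your proposal is correct and takes essentially the same approach as the paper: both argue by contradiction that if the normal bound failed at a primary point ${\bf p}$, the surface would cross the horizontal grid line through the shared grid point at a cut point strictly closer to that grid point (at offset roughly $(|n_3|/|n_1|)|\delta| + O(\delta^2) < |\delta|$), contradicting the definition of primary, and both reduce the graph statement to this bound via $z_x = -\phi_x/\phi_z$, $z_y = -\phi_y/\phi_z$. The only difference is technical implementation — the paper parametrizes the slice curve $\phi(x,0,z)=0$ as $x = x(z)$, bounds $|x'(z)| \leq (1+Ch)^{-1} + C_2h < 1$ near ${\bf p}$, and applies the mean value theorem, where you use a second-order tangent-plane estimate with a bootstrap — and the issues you flag (uniformity in $\delta$, the grid-point-on-surface degeneracy) are resolved exactly as you describe.
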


\begin{proof}
For convenience assume ${\bf p} = (0,0,\theta h)$ with $0 < \theta \leq \lilhalf$.  With
$\nabla \phi = (\phi_x,\phi_y,\phi_z)$, suppose $|\phi_x| \geq |\phi_y|$ at $\bf p$.
Since $\bf{n(p)} = \pm \nabla \phi/|\nabla \phi|$, it will be enough to show that
$|\phi_x| \leq (1+ Ch)|\phi_z|$, with $C$ to be chosen.  If this is not true,
$|\phi_x| \geq |\nabla\phi|/\sqrt{3}$
and $|\phi_z|/|\phi_x| < (1 + Ch)^{-1}$.  Near $\bf p$, $x$ is a function of $z$ along the curve $\phi(x,0,z) = 0$, with $x(\theta h) = 0$ and
$x'(\theta h) = -\phi_z/\phi_x$ at $\bf p$.  Also $x''$ is bounded, so that
$|x'(z)| \leq (1 + Ch)^{-1} + C_2 h$ near $\bf p$.  We choose $C \geq 2C_2$ so that $|x'(z)| < 1$.
Then $|x(\theta h) - x(0)| < \theta h$, or $|x(0)| < \theta h$, and thus $(x(0),0,0)$ is a cut point closer to $(0,0,0)$ than
$\bf p$, contradicting the fact that  $\bf p$ is primary.  The second statement follows from the first and
the equalities $z_x = -\phi_x/\phi_z$, $z_y = - \phi_y/\phi_z$. 
\end{proof}

\begin{lemma}
  The set of primary points is quasi-uniform:  Any two primary points are separated by distance
at least $(\sqrt{2}/2)h - Ch^2$ with some constant $C$.  For a primary point ${\bf p} = (ih,jh,z(ih,jh)) \in \Gamma_3$,
there is a primary point within distance $(3\sqrt{2}/2)h + Ch^2$ of $\bf p$ which belongs to the same grid point as
the cut point ${\bf p^+} = ((i+1)h,jh,z((i+1)h,jh)) \in \Gamma_3$, and similarly for other neighbors of $\bf p$ and for
$\Gamma_1$, $\Gamma_2$.
\end{lemma}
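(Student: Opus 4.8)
The plan is to treat the two assertions separately, using throughout the observation that each primary point $\mathbf{p}$ sits at distance at most $\lilhalf h$ from its assigned grid point $\mathbf{g}$, displaced along a single coordinate axis: if $\mathbf{p}\in\Gamma_3$ then $\mathbf{p}=\mathbf{g}+(0,0,t)$ with $|t|\le\lilhalf h$, and analogously for $\Gamma_1,\Gamma_2$. Distinct primary points have distinct grid points, since each grid point carries at most one primary point. Writing $\mathbf{p}_i=\mathbf{g}_i+\mathbf{d}_i$ with $\mathbf{g}_1\ne\mathbf{g}_2$ and $\mathbf{d}_i$ axis-aligned of length $\le\lilhalf h$, the separation estimate becomes a question about $|(\mathbf{g}_1-\mathbf{g}_2)+(\mathbf{d}_1-\mathbf{d}_2)|$.

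For the lower bound I would split into cases according to the coordinate sets. If $\mathbf{p}_1,\mathbf{p}_2$ lie in the same $\Gamma_\nu$, say $\Gamma_3$, and project to different grid points in the $xy$-plane, then already the horizontal separation is $\ge h$; if they project to the same grid point they lie on a common vertical grid line, and the well-separation property of admissible cut points quoted above keeps them a fixed distance apart, exceeding $h$ for small $h$. Either way the bound holds comfortably. The delicate case is $\mathbf{p}_1\in\Gamma_3$, $\mathbf{p}_2\in\Gamma_1$ (and its permutations), where the displacements lie along different axes and can nearly cancel the grid offset.

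The hard part, and the source of the constant $\sqrt2/2$, is exactly this mixed case. Here I would use that the surface is $C^2$: if $d:=|\mathbf{p}_1-\mathbf{p}_2|<h$ then the unit normals satisfy $|\mathbf{n}(\mathbf{p}_1)-\mathbf{n}(\mathbf{p}_2)|\le C_1 d$, so by Lemma 2.1 the fact that $n_3$ dominates at $\mathbf{p}_1$ while $n_1$ dominates at $\mathbf{p}_2$ forces $|n_1|$ and $|n_3|$ to agree to within $O(h)$ — the configuration is pinned near a $45^\circ$ orientation. I would then replace the surface by its tangent plane $\Pi$ at $\mathbf{p}_1$, incurring an $O(h^2)$ error, and reduce to a planar computation: the two points become, to within $O(h^2)$, the intersections of a single line $L=\Pi\cap\{y=\text{const}\}$ of slope near $\pm1$ with a vertical and a horizontal grid line, respectively. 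A short computation on $L$ shows that if these two intersections lay in the same grid cell the offset parameter would be at most $\lilhalf h$, so requiring distinct grid points forces it to exceed $\lilhalf h$ and yields $d\ge(\sqrt2/2)h-Ch^2$. Verifying that the tangent-plane reduction and the near-$45^\circ$ pinning are quantitatively valid, with all errors genuinely $O(h^2)$, is the main obstacle; the combinatorics of the remaining cases is routine.

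For the second assertion I would argue directly. By Lemma 2.1 the surface is a graph $z=z(x,y)$ near $\mathbf{p}$ with $|z_x|\le 1+Ch$, so $\mathbf{p}^+=((i+1)h,jh,z((i+1)h,jh))$ is a genuine admissible cut point of $\Gamma_3$ and $|\mathbf{p}-\mathbf{p}^+|\le h\sqrt{1+(1+Ch)^2}\le\sqrt2\,h+Ch^2$. Let $\mathbf{g}^+$ be the grid point of $\mathbf{p}^+$ and $\mathbf{p}'$ the primary point carried by $\mathbf{g}^+$. If $\mathbf{p}^+$ is itself primary then $\mathbf{p}'=\mathbf{p}^+$ and we are done. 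Otherwise $\mathbf{p}'$ cannot also lie in $\Gamma_3$: it would then sit on the same vertical grid line as $\mathbf{p}^+$ within $\lilhalf h$ of $\mathbf{g}^+$, contradicting well-separation. Hence $\mathbf{p}'$ is displaced from $\mathbf{g}^+$ along a different axis than $\mathbf{p}^+$, the two displacements are orthogonal, and $|\mathbf{p}^+-\mathbf{p}'|\le\sqrt{(\lilhalf h)^2+(\lilhalf h)^2}=(\sqrt2/2)h$. Adding the two estimates gives a primary point within $(3\sqrt2/2)h+Ch^2$ of $\mathbf{p}$, as claimed; the other neighbors and the sets $\Gamma_1,\Gamma_2$ follow by symmetry.
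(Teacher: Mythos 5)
Your proposal is correct and follows essentially the same route as the paper: distinctness of assigned grid points forces the offset $|x^*|\geq h/2$ in the delicate mixed case (e.g.\ $\Gamma_3$ versus $\Gamma_1$), Lemma 2.1 pins the slice-curve slope near $\pm 1$ so that the Pythagorean computation yields $(\sqrt{2}/2)h - Ch^2$, and the second assertion follows from $|{\bf p}^+ - {\bf p}| \leq \sqrt{2}\,h + Ch^2$ plus the $(\sqrt{2}/2)h$ bound between ${\bf p}^+$ and the primary point at its grid point. The only differences are cosmetic: the paper runs a mean-value argument along the curve $\phi(x,0,z)=0$ using $|z'| \geq 1 - C_1h$ where you linearize to the tangent plane via Lipschitz continuity of the normal, and you make explicit two steps the paper leaves implicit (the fixed separation of admissible cut points on a common grid line, and the orthogonality of the two axis displacements behind the $(\sqrt{2}/2)h$ bound).
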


\begin{proof}
Suppose ${\bf p} = (0,0,\theta h)$ is a primary point belonging to the grid point $(0,0,0)$, with $0 \leq \theta \leq \lilhalf$.  Any other point in
$\Gamma_3$ is at least distance $h$ away.  Any primary point $(x^*,jh,kh) \in \Gamma_1$ is at distance at least $h$
provided $j \neq 0$ or $k \neq 0$ or $1$, and similarly for $\Gamma_2$.  
We are left to consider possible primary points of the form $(x^*,0,0)$, $(x^*,0,h)$, $(0,y^*,0)$, or $(0,y^*,h)$.
In the first case, if $|x^*| < h/2$, a cut point $(x^*,0,0)$
belongs to the grid point $(0,0,0)$ and must be secondary.  Suppose $(x^*,0,0)$ is a primary point
with $|x^*| \geq h/2$.  Then $|\phi_z| \leq (1 + Ch)|\phi_x|$ as in Lemma 2.1.  Also, because $\bf p$ is primary,
$z$ is a function of $x$ on the curve $\phi(x,0,z) = 0$, with $z' = -\phi_x/\phi_z$, so that $|z'| \geq 1 - C_1h$.
With $z(0) = \theta h$ and $z(x^*) = 0$, we have $|\theta h - 0| \geq (1 - C_1 h)|x^*|$.  Then $|(0,0,\theta h) - (x^*,0,0)|
\geq |x^*| (1 + (1 - C_1 h)^2)^{1/2} \geq (h/2)(\sqrt{2} - C_2h)$, with some $C_2$, and this is equivalent to the conclusion stated.
For a cut point $(x^*,0,h)$ we can argue similarly, using the bound $|z_x| \leq (1 + Ch)$ and the fact that $h - \theta h \geq h/2$.
Other cases are analogous to these.

For the second statement, we have $z = z(x,y)$ on the surface with $|z_x| \leq (1 + Ch)$ as before,
so that ${|\bf p^+ - p|}\leq (\sqrt{2} + C'h)\,h$.  If ${\bf p^+}$ is secondary, the primary point belonging to the same grid
point is within distance $(\sqrt{2}/2)h$ of $\bf p^+$, and the conclusion follows.
\end{proof}

\ssection {\bf The Laplace-Beltrami operator and diffusion equations}


\noindent{\bf The discrete Laplace-Beltrami operator.}
The Laplace-Beltrami operator, or surface Laplacian, is the generalization of the usual Laplacian to a surface or manifold.  For the unit sphere it is the angular part of the Laplacian.  It is invariant under change of coordinates and is expressed in any coordinate system in terms of the metric tensor.  Suppose
$(\xi_1,\xi_2)$ are coordinates on a surface $\Gamma \subseteq \R^3$ so
that part of $\Gamma$ is given as ${\bf X}(\xi_1,\xi_2)$.  At each point we have tangent vectors ${\bf X}_i = \pa{\bf X}/\pa\xi_i$, $i=1,2$, metric tensor $g_{ij} = {\bf X_i}\cdot{\bf X_j}$, the inverse $g^{ij} = (g_{ij})^{-1}$, and $g = \det{g_{ij}}$.  The Laplace-Beltrami operator applied to a scalar function
$u$ on $\Gamma$ is
\beq \Delta u \eq \frac{1}{\sqrt{g}}\sum_{i,j = 1}^2 \frac{\pa}{\pa\xi_i}
  \left( \sqrt{g}g^{ij} \frac{\pa u}{\pa\xi_j} \right)  \eeq

We can discretize the LB operator in divergence form on a regular grid using a stencil with 9 points.  We will describe the discrete operator $\Delta_h$, with grid size $h$,  at a point labeled $(\xi_1,\xi_2) = (0,0)$ for convenience, using
points $(\sigma_1 h, \sigma_2 h)$ with $\sigma_1, \sigma_2 = -1,0,1$.
We will set $a^{ij} = \sqrt{g}g^{ij}$ and use notation such as
$$ a^{ij}_{+-} = (a^{ij}(h,-h) + a^{ij}(0,0))/2 $$
Because of the mixed derivatives, we
begin with the second difference along a diagonal,
\beq L_\diagup (a^{12}, u) \eq 
  \left( a^{12}_{++}(u(h,h) - u(0,0)) - a^{12}_{--}(u(0,0) - u(-h,-h))\right)/h^2 \eeq
To approximate (3.2) we use the Taylor expansion
\beq a^{12}_{++} \eq a^{12} + h(a^{12}_1 + a^{12}_2)/2 + 
       h^2(a^{12}_{11} + a^{12}_{22} + 2a^{12}_{12})/4 + O(h^3) \eeq
and the corresponding formula for $a^{12}_{--}$,
where the quantities on the right are evaluated at $(0,0)$ and subscripts denote derivatives.  We find
\beq \left(a^{12}_{++} + a^{12}_{--}\right)/2 = a^{12} + O(h^2) \,,\qquad
    \left(a^{12}_{++} - a^{12}_{--}\right) = h(a^{12}_1 + a^{12}_2) + O(h^3) \eeq
Similarly we have
\beq u(\pm h, \pm h) \eq u \;\pm\; h(u_1 + u_2)
+ h^2(u_{11} + u_{22} + 2u_{12})/2     
                \pm c_3\, h^3 + O(h^4)  \eeq
from which we get
\begin{multline}
  u(h,h) + u(-h,-h) - 2u(0,0) = h^2(u_{11} + u_{22} + 2u_{12}) + O(h^4) \,,
       \\ u(h,h) - u(-h,-h) = 2h(u_1 + u_2) + O(h^3)
\end{multline}
We rearrange (3.2) and use (3.4) and (3.6) to obtain
\begin{multline}
    L_\diagup (a^{12}, u) = 
\left(a^{12}_{++} + a^{12}_{--}\right)(u(h,h) + u(-h,-h) - 2u(0,0))/2h^2 \\
   + \left(a^{12}_{++} - a^{12}_{--}\right)(u(h,h) - u(-h,-h))/2h^2 \\
       \eq a^{12}(u_{11} + u_{22} + 2u_{12})
          \p (a^{12}_1 + a^{12}_2)(u_1 + u_2) + O(h^2)
\end{multline}
We also use the standard second differences in direction $i = 1$ or $2$
\begin{gather}
   L_1({\tilde a}^{11}, u) \eq 
\left({\tilde a}^{11}_{+0}(u(h,0) - u(0,0)) 
   - {\tilde a}^{11}_{-0}(u(0,0) - u(-h,0))\right)/h^2 \\
   L_2({\tilde a}^{22}, u) \eq 
\left({\tilde a}^{22}_{0+}(u(0,h) - u(0,0)) 
   - {\tilde a}^{22}_{0-}(u(0,0) - u(0,-h))\right)/h^2 
\end{gather}
with ${\tilde a}^{ii} = a^{ii} - a^{12}$.  As above we find
\beq  L_i(a^{ii}- a^{12}, u) \eq (a^{ii} - a^{12})u_{ii} 
      \p (a^{ii}_i - a^{12}_i) u_i
       \p O(h^2) \eeq
We now define
\beq \Delta_h u \,\equiv\, \left( L_1(a^{11} - a^{12}, u) + 
     L_2(a^{22} - a^{12}, u)  \p L_\diagup (a^{12}, u)\right)/\sqrt{g}  \eeq
From (3.7),(3.10) we have, after combining terms,
\begin{multline}
 \sqrt{g}\,\Delta_h u \eq a^{11}u_{11} + a^{22}u_{22} + 2a^{12}u_{12} \\
   + a^{11}_1u_1 + a^{22}_2u_2 + a^{12}_1u_2 + a^{12}_2u_1 + O(h^2)\\
  \eq  (a^{11}u_1)_1 + (a^{22}u_2)_2 + (a^{12}u_1)_2  + (a^{12}u_2)_1 + O(h^2)
\end{multline}
Comparing with (3.1), we see that $\Delta_h u = \Delta u + O(h^2)$, as for
usual centered second difference operators.

We could alternatively use the other diagonal and define
\beq L_{\diagdown} (a^{12}, u) \eq 
  \left( a^{12}_{-+}(u(-h,h) - u(0,0)) - a^{12}_{+-}(u(0,0) - u(h,-h))\right)/h^2 \eeq
We find as above that
\beq L_{\diagdown} (a^{12}, u) \eq a^{12}(u_{11} + u_{22} - 2u_{12})
          \p (-a^{12}_1 + a^{12}_2)(-u_1 + u_2) + O(h^2) \eeq
Proceeding as before we define
\beq \Delta_h u \,\equiv\, \left(L_1(a^{11} + a^{12}, u) + 
     L_2(a^{22} + a^{12}, u) \m L_\diagdown (a^{12}, u)\right)/\sqrt{g} \eeq
It is again accurate to $O(h^2)$.
We use (3.11) if  $g^{12} \geq 0$ and (3.15) if $g^{12} \leq 0$.

In this work the coordinates are always $(y,z)$, $(z,x)$, or $(x,y)$, where
${\bf X} = (x,y,z)$.  In the third case, for example, $z = z(x,y)$ and
\beq g = 1 + z_x^2 + z_y^2 \,, \quad g^{11} = (1+z_y^2)/g \,, \quad
   g^{12} = - z_xz_y/g \,, \quad  g^{22} = (1 + z_x^2)/g  \eeq
For points in $\Gamma_3$ we have $|z_x|, |z_y| \leq 1 + O(h)$, according to Lemma 2.1.  It follows that  $g^{ii} - |g^{12}| > 0$, and the off-diagonal coefficients in (3.11),(3.15) are $\geq 0$ while the coefficient of $u(0,0)$ is $< 0$, assuming $g^{12} \geq 0$ or $g^{12} \leq 0$ respectively.

For a familiar surface, such as the sphere of radius $r$, we may prefer to replace  (3.11),(3.15) with a non-divergence form,
\begin{gather}
   \Delta_h u \eq (g^{11} - g^{12})D^2_1 u \p (g^{22} - g^{12})D^2_2 u
             \p g^{12}D^2_\diagup u  \p b_1 D_1 u + b_2 D_2 u \\
   \Delta_h u \eq (g^{11} + g^{12})D^2_1 u \p (g^{22} + g^{12})D^2_2 u
             \m g^{12}D^2_\diagdown u  \p b_1 D_1 u + b_2 D_2 u 
\end{gather}
where $D^2_\diagup u = L_\diagup(1,u)$ etc.,
with centered differences in the last two terms.
For the sphere with $z = z(x,y)$,
\begin{gather}
 g = r^2/z^2\,,\quad g^{11} = (r^2-x^2)/r^2\,,
         \quad g^{22} = (r^2-y^2)/r^2\,,\quad g^{12} = -xy/r^2\,, \\
  b_1 = - 2x/r^2\,, \quad b_2 = - 2y/r^2
\end{gather}

\medskip


\noindent{\bf The diffusion equation.}
The prototype equation for diffusion on a surface $\Gamma$ is
\beq u_t \eq \alpha \Delta u \quad \mbox{on\;} \Gamma  \eeq
where $u(\cdot,t)$ is function on $\Gamma$, $\Delta$ is the Laplace-Beltrami operator and $\alpha$ is a coefficient.
To solve this equation with a specified initial state we discretize the surface as in
Section 2 and replace $u$ with a function $u_h$ on $\Gamma_h$, the set of
cut points, both primary and secondary.  We also use the restriction $u^p_h$ to the set of primary points $\Gamma^p_h$.  We think of both as column vectors.  We calculate
the discrete LB operator $\Delta_h$ applied to
$u_h$ at points in $\Gamma^p_h$.  Thus $\Delta_h$ is an $n_p\times
n_{tot}$ matrix where $n_p$ is the number of primary points and $n_{tot}$ is
the number of all admissible points, primary and secondary.  The values of
$\Delta_hu_h$ at primary points are found using either (3.11),(3.15) or (3.17),(3.18) in the three coordinate systems.
We can extend
any function $f^p_h$ on $\Gamma^p_h$ to the remaining secondary points in $\Gamma_h$
by the equilibration $E_h$ defined in (2.4), i.e.,
\beq  E_h f^p_h\eq (f^p_h\;f^s_h)^T\,,\qquad 
       f^s_h \eq (I -\Pi_{ss})^{-1}\Pi_{sp}f^p_h  \eeq
so that $E_h$ is an $n_{tot}\times n_p$ matrix.

To solve (3.21) we replace $\Delta u$ by $E_h\Delta_h u_h$ and select a time stepping method.  We have used the forward Euler method and BDF2, the second order backward difference formula, as representatives of explicit and implicit methods.  With
time step $k$, the forward Euler method approximates (3.21) with
\beq u^{n+1}_h \eq u^n_h + k\alpha E_h\Delta_h u^n_h \eeq
whereas for BDF2 we have
\beq u ^{n+1}_h \eq
  (I - \frac23 k\alpha E_h\Delta_h)^{-1}(\frac43 u^n_h - \frac13 u^{n-1}_h) \eeq
We have not proved that the inverse matrix in (3.24) exists, but we obtain it in our
computations.  Properties of this resolvent matrix are discussed further below.
We could instead formulate the solution in terms of $u^p_h$ on $\Gamma^p_h$.
The forward Euler version in this case would be
\beq u^{p,n+1}_h \eq u^{p,n}_h + k\alpha \Delta_hE_h u^{p,n}_h \eeq
Assuming the initial state is equilibrated, then (3.23) and (3.25) are equivalent, i.e., if $u^{p,n}_h$ solves (3.25) then $u^n_h = E_h u^{p,n}_h$ solves (3.23).  Similarly, we find
that if $f_h$ on $\Gamma_h$ is equilibrated, with $f_h = E_hf^p_h$, and if
$v^p_h$ on $\Gamma^p_h$ satisfies
$ (I - \kappa\Delta_hE_h)v^p_h = f^p_h$, then $v_h = E_hv^p_h$ on $\Gamma_h$
satisfies $(I - \kappa E_h\Delta_h) v_h = f_h$.  Thus the resolvents of the
$n_{tot}\times n_{tot}$ matrix $E_h\Delta_h$ and the $n_p\times n_p$ matrix
$\Delta_h E_h$ are closely related.  We study the latter operator further below.

As a first example for the diffusion equation we choose $\Gamma$ to be the unit sphere
and the initial state to be a spherical harmonic, as in several references.  We take
\beq u^0(x,y,z) = 7(x - 2y)(15z^2 - 3)/8 \eeq
Then $\Delta u^0 = - 12u^0$ on the unit sphere.  We choose $\alpha = 1/12$ in (3.21) so that the exact solution is $u = e^{-t}u^0$.
We embed the sphere in a computational box $[-1.2,1.2]^3$ and introduce a grid with
$N$ intervals in each direction, so that $h = 2.4/N$.
We discretize as in Section 2.
The parameter $\eta$ is $.45$.  We then solve to time $t = 1$ with either the forward Euler method (FE) or BDF2, started with a backward Euler step.  We use
either the divergence form of $\Delta_h$ in (3.11),(3.15) or the nondivergence form (3.17),(3.18).  For each $N$ we choose time step
$k = 8/N^2$ with FE and $k = 1/(2N)$ with BDF2.  Relative errors in the four cases are shown in Table 3.1.
The relative $L^2$ error is
\beq \|u^{comp} - u^{exact}\|/\|u^{exact}\|\,, \qquad
    \|w\|^2 = (1/n_{tot})\sum_{{\bf X} \in \Gamma_h} w({\bf X})^2 \eeq
The relative maximum error is a similar ratio of absolute maxima.
We see that the convergence is about second order in each case.
The errors are somewhat larger for the divergence form, but it can be
used for a general surface.

\begin{table}[ht]
{\footnotesize
\caption{Relative errors for diffusion on the unit sphere} 
\begin{center} 
\begin{tabular}{| c | c | c | c | c | c | c | c | c |} \hline
\multicolumn{1}{|c|}{} &
\multicolumn{2}{c|}{FE} & \multicolumn{2}{c|}{BDF2} &
    \multicolumn{2}{c|}{FE} & \multicolumn{2}{c|}{BDF2}\\
\multicolumn{1}{|c|}{} &
\multicolumn{2}{c|}{nondiv} & \multicolumn{2}{c|}{nondiv} &
    \multicolumn{2}{c|}{div} & \multicolumn{2}{c|}{div}\\ \hline
N & max & $L^2$ & max & $L^2$ & max & $L^2$ & max & $L^2$ \\ \hline 
80 & 4.94e-4 & 3.21e-4 & 8.24e-4 & 4.64e-4 &
 1.01e-3 & 8.65e-4 & 1.45e-3 & 1.45e-3 \\ \hline
160 & 1.03e-4 & 5.92e-5 & 1.90e-4 & 1.32e-4 &
 2.44e-4 & 2.27e-4 & 3.67e-4  & 3.77e-4 \\ \hline
320 & 1.96e-5 & 1.67e-5 & 2.21e-5 & 2.44e-5 &
 4.96e-5 & 5.46e-5 & 8.65e-5 & 9.04e-5\\ \hline
640 & 5.03e-6 & 4.41e-6 & 4.53e-6 & 4.53e-6 &
 1.19e-5 & 1.35e-5 & 2.04e-5 & 2.05e-5 \\ \hline
\end{tabular} \end{center}   }
\end{table}

For our next examples we use two different surfaces.  The first is the ellipsoid
\beq x^2/a^2 \p y^2/b^2 \p z^2/c^2 \eq 1 \eeq
with $a = 1$, $b = .8$ and $c = .65$.  The second is obtained by rotating a Cassini oval about the $z$-axis,
\beq (x^2 + y^2 + z^2 + a^2)^2 \m 4a^2(x^2 + y^2) \eq b^4 \eeq
with $a = .65$ and $b/a = 1.1$, a nonconvex surface.  In both cases we solve (3.21) with $\alpha = .1$ and initial state $u^0(x,y,z) = \cos(x - y + z)$.  We compute
$\Delta_h$ as in (3.11), (3.15).  We use FE with
$k = 8/N^2$ and BDF2 with $k = 1/(10N)$ and solve to time $t = 1$.  Since we do not know the exact solution,
we measure errors by comparing successive runs.  With $N = 80$, $160$, $320$,
we compute successive $L^2$ errors
$ \eps_N = \|u^N - u^{2N}\| $,
with norm as in (3.27); similarly we find successive maximum errors.
Both are displayed in Table 3.2.
The convergence for the ellipsoid is about second order.  For the Cassini oval
the rate is less clear, but the decrease from $N = 80$ to $N = 320$ is more
rapid than second order.

\begin{table}[ht]
{\footnotesize
\caption{Successive errors for diffusion on two surfaces} 
\begin{center} 
\begin{tabular}{| c | c | c | c | c | c | c | c | c |} \hline
\multicolumn{1}{|c|}{} & \multicolumn{4}{c|}{Ellipsoid} & 
\multicolumn{4}{c|}{Cassini Oval} \\ 
\multicolumn{1}{|c|}{} &
\multicolumn{2}{c}{FE} & \multicolumn{2}{c|}{BDF2} &
    \multicolumn{2}{c}{FE} & \multicolumn{2}{c|}{BDF2}\\ \hline
N & max & $L^2$ & max & $L^2$ & max & $L^2$ & max & $L^2$ \\ \hline 

80 & 2.35e-4 & 6.32e-5 & 2.53e-4 & 9.46e-5 &
   6.68e-4 & 1.68e-4 & 6.52e-4 & 1.88e-4 \\ \hline
160 & 6.47e-5 & 1.73e-5 & 7.15e-5 & 2.47e-5 & 
   9.09e-5 & 3.10e-5 & 9.07e-5 & 3.39e-5\\ \hline
320 & 1.56e-5 & 4.60e-6 & 2.06e-5 & 6.81e-6 & 
   2.72e-5 & 9.12e-6 & 2.33-5 & 9.76-6\\ \hline
\end{tabular}  \end{center}   }
\end{table}

\medskip


\noindent{\bf Spectrum and resolvent of the discrete LB operator.}
We will call the $n_p\times n_p$ matrix
$\Delta^{red}_h = \Delta_h E_h$ the {\it reduced LB operator}.
It approximates the LB operator discretized to the primary points.  To compare it
to the exact operator, we compute its lowest eigenvalues on the unit sphere.
The exact spherical Laplacian has eigenvalues $\lambda_n = - n(n+1)$, $n \geq 0$,
with multiplicity $2n+1$.  Table 3.3 gives the errors in the first 49 eigenvalues for $\Delta^{red}_h$ for various $N$.
For $1\leq n \leq 7$, the maximum absolute error is displayed for the $2n+1$ eigenvalues close to $\lambda_n$.  They appear to converge to second order, with larger errors for higher $n$.  Here
$\Delta^{red}_h$ was computed in divergence form.  The nondivergence form gives somewhat smaller errors, except for $n = 1$.  Eigenvalues were also computed in
\cite{zhao13}.

\begin{table}[ht]
{\footnotesize
\caption{Absolute errors in eigenvalues for the spherical Laplacian} 
\begin{center}
\begin{tabular}{ | c | c | c | c | c | c |} \hline
 $-\lambda$ & mult & 40 & 80 & 160 & 320 \\ \hline
0 & 1 & 2.2e-15 & 3.1e-14 & 1.2e-14  & 1.8e-14\\ \hline
2 & 3 &3.01e-3 & 7.64e-4 & 1.10e-4 & 3.77e-5 \\ \hline
6 & 5 & 3.19e-2 & 7.97e-3 & 1.99e-3 & 4.95e-4\\ \hline
12 & 7 & 6.93e-2 & 1.70e-2 & 3.82e-3 & 1.02e-3\\ \hline
20 & 9 & 1.87e-1 & 4.67e-2 & 1.17e-2 & 2.91e-3\\ \hline
30 & 11 & 3.37e-1 & 8.45e-2 & 2.07e-2 & 5.25e-3\\ \hline
42 & 13 & 7.16e-1 & 1.79e-1 & 4.49e-2 & 1.13e-2\\ \hline
\end{tabular}  \end{center}  }
\end{table}

As noted in \cite{zhao13} it is desirable for a discretization of $I - k\Delta$ to be an M-matrix.
A square matrix $A$ is called an M-matrix if the diagonal entries are $> 0$, the off-diagonal entries are $\leq 0$, and $A$ is strictly diagonally dominant.  It follows that $A^{-1}$ is positive, i.e., each entry of $A^{-1}$ is $\geq 0$.  (E.g. see \cite{hackbook}.)  These properties have important consequences for the stability of schemes for elliptic and parabolic equations.  For example, if $\Delta_h^0$
 is the usual five-point Laplacian on a rectangle in $\R^2$ with periodic boundary conditions, $I - k\Delta_h^0$ is an M-matrix for $k>0$.  The row sums of
$(I - k\Delta_h^0)^{-1}$ are $1$, since those
of $\Delta_h^0$ are $0$.  This fact and the positivity imply that the resolvent operator $(I - k\Delta_h^0)^{-1}$, acting on vectors in maximum norm, has norm $1$.  Consequently if the heat equation $u_t = \Delta_h^0 u$ is solved with backward Euler time steps, the maximum of the solution is nonincreasing in time. 

Our discrete LB operator $\Delta_h$, constructed as (3.11),(3.15), has the correct sign conditions, but the interpolation (2.1) has both signs.  For this reason,
$I - k\Delta_h^{red}$ fails to be an M-matrix in general.  Nonetheless, we see in our computations that $(I -k\Delta_h^{red})^{-1}$ often has nonnegative entries. 
Constants are null vectors for $\Delta^{red}_h$, since a constant on the primary points equilibrates to the constant, and $\Delta_h$
applied to a constant vector gives zero.  Thus the row sums of $\Delta^{red}_h$
are zero, and 
$(I -k\Delta_h^{red})^{-1}$ has row sums one.  If $(I -k\Delta_h^{red})^{-1} \geq 0$ then it has norm $1$ as in the familiar case above.
We observe that $(I -k\Delta_h^{red})^{-1} \geq 0$ for $k/h^2$ larger than $1$
but not for $k/h^2$ small.  We are not able to prove this
positivity for surfaces in $\R^3$, but we  give a proof
for closed curves in $\R^2$ in Appendix A,
since it gives some insight into the interaction between the discrete LB operator and the equilibration.  There are only a few results concerning properties of matrices
perturbed from $M$-matrices, e.g. \cite{bouchon,bramble}.

\medskip


\noindent{\bf The Poisson equation.}
For the exact LB operator, the Poisson equation    
\beq \Delta u \eq f \;\mbox{on\;} \Gamma  \eeq
has a solution if and only if $\int f\,dS = 0$, and the solution is
unique up to an arbitrary constant (e.g. see \cite{dziuk}).
We noted that for the reduced discrete operator $\Delta^{red}_h$ constants are null vectors.
The eigenvalue computation for
$\Delta_h^{red}$ suggests they are the only null vectors, so that
the range has codimension one and solutions of the discrete equation are again
unique except for a constant.  We can attempt to solve the equation (3.30),
given $f$ in the range, by augmenting the  discrete problem to
\beq \Delta_h^{red} u^h \p \beta{\bf 1}  \eq f^h \,, \qquad
    \sum u^h = 0 \eeq
where $f^h$ is the restriction to primary points, $\beta$ is an extra (scalar)
unknown, ${\bf 1}$ is a vector of $1$'s, and the sum is over $\Gamma_h^p$.
The extra term adjusts $f^h$ so that $f^h - \beta {\bf 1}$ can be 
in the range of $\Delta_h^{red}$.  We expect that $f^h$ is
close to being in the range of $\Delta_h^{red}$ and consequently
$\beta$ will be small.
Then $u^h$ should approximate the solution of (3.30) with $\Sigma u = 0$.  (Such a method was outlined
in {\cite{hackbook}, Sec. 4.7 for the Neumann problem.)

We test this approach to solving (3.30) with a known exact solution.  We define
$u = \cos(x/r + y/r -2z/r)$ on $\R^3$, with $r^2 = x^2 + y^2 + z^2$ and find the usual Laplacian $f$.
Then the LB operator applied to $u$ on the unit sphere will equal $f$.
We solve (3.31) on the unit sphere with this choice of $f$.
We compare the computed solution with $u - u_0$ where $u$ is the exact
solution and $u_0 = n_p^{-1}\Sigma u$.
(Note that $u_0$ depends on $h$.)  With $N = 80$, $160$, $320$, $640$
we find absolute maximum errors
$9.20e\!-\!4$, $2.35e\!-\!4$, $5.70e\!-\!5$, $1.40e\!-\!5$, showing $O(h^2)$ convergence.

\ssection {\bf Advection and flow on the sphere}


\noindent {\bf A linear advection equation.}
As a test problem for transport on a surface, we have constructed
a linear advection equation on the unit sphere with an exact solution.
We will use $\varphi$ and $\theta$ for longitude and latitude, resp., with
$0 \leq \varphi \leq 2\pi\,$, $-\pi/2 \leq \theta \leq \pi/2$, so that
\beq x = \cos{\varphi}\cos{\theta}\,, \quad y = \sin{\varphi}\cos{\theta}\,, \quad
          z = \sin{\theta} \eeq
We set 
\beq  r^2 \eq \cos^2{\theta} \eq x^2 + y^2 \eq 1 - z^2  \eeq
Our advection equation for unknown $\Phi(\varphi,\theta,t)$ is
\beq \frac{\pa\Phi}{\pa t} \p \frac{\pa\Phi}{\pa \varphi}
          \m \cos{\varphi}\cos^2{\theta}  \frac{\pa\Phi}{\pa\theta} \eq 0 \eeq
The vector field represents rotation about the vertical axis with oscillation in 
latitude.  The equation
can be solved by the method of characteristics.  As initial state we take
\beq \Phi(\varphi,\theta,0) \eq \cos^2{\theta} \eq r^2  \eeq  
The exact solution in rectangular coordinates is
\beq \Phi = \frac{r^2}{\left( z + y(1-\cos{t}) + x\sin{t} \right)^2 + r^2} \eeq
To solve the initial value problem with the present method we first rewrite the
p.d.e. (4.3) in each of the three coordinate systems, e.g. with $\Phi = \Phi(x,y)$
in $\Gamma_3$.  The three forms are
\begin{gather}
\Phi_t + (x + xyz)\Phi_y - xr^2\Phi_z \eq 0\,,\qquad x = x(y,z) \\
\Phi_t - xr^2\Phi_z + (x^2z - y)\Phi_x \eq 0\,,\qquad y = y(z,x) \\
\Phi_t + (x^2z - y)\Phi_x + (x + xyz)\Phi_y \eq 0\,, \qquad z = z(x,y)
\end{gather}
We discretize the three equations using the MacCormack two-step version of the 
Lax-Wendroff method.  With time step $k$, given $\Phi^n \approx \Phi(\cdot,nk)$
at all points, primary and secondary, the first step is to compute the update to $\Phi^n$ at primary points, equilibrate the update to secondary points, and add the update to $\Phi^n$ to obtain the predictor $\Phi^*$.  Thus e.g. in the third system we compute the update at primary points
\beq F^* \eq  \m
  (x^2z - y)D_x^+\Phi^n \m (x + xyz)D_y^+\Phi^n  \eeq
where $D_x^+$, $D_y^+$ are usual forward differences.  After doing the same in all three systems, we equilibrate as in (2.4) to extend $F^*$ from primary points to $E_hF^*$ defined on all points, with $E_h$ as in (3.22).  We then set
\beq \Phi^* \eq \Phi^n \p k E_h F^*  \eeq
The second step uses a similar procedure with backward differences, with the update in the third system at the primary points
\beq F^{n+1} = \m (x^2z - y)D_x^-\Phi^* 
      \m (x + xyz)D_y^-\Phi^*  \eeq
and after finding $F^{n+1}$ at primary points in all three systems
\beq \Phi^{n+1} \eq \frac12(\Phi^n + \Phi^*) \p \frac{k}{2}E_hF^{n+1} \eeq

We use grid size $h = 2.4/N$ and time step $k = 1/2N$
for various choices of $N$.  Relative errors are displayed in Table 4.1.
They are clearly $O(h^2)$.
The $L^2$ error is 
\beq
\frac{\left( \sum \left| \Phi^{comp}({\bf p}) - \Phi^{exact}({\bf p})\right|^2 \right)^{1/2} }
  {\left( \sum  |\Phi^{exact}({\bf p})|^2 \right)^{1/2} }
\eeq
where the sum is over all primary points and admissible secondary points.  Similarly
the relative maximum error is $\max |\Phi^{comp} - \Phi^{exact}| /
\max |\Phi^{exact}|$.  We also compute the surface integral of $\Phi$ using the method explained in Appendix B and display the relative error.  (The integral is not constant in time since ${\bf v}$ does not have divergence zero.)

\begin{table}[ht]
{\footnotesize
\caption{Relative errors for the linear advection equation} 
\begin{center} 
\begin{tabular}{| c | c | c | c | c |} \hline
 N & time & $\;$ max error $\;$ & $\;$ $L^2$ error $\;$ & error in $\int\Phi$
\\ \hline  \hline
 \multirow{3}{*}{80}
 & 1 & 3.29e-3  & 8.13e-4 &  1.47e-4
\\ \cline{2-5}
 & 2 &  7.12e-3 &  2.50e-3 &  1.70e-4
\\ \cline{2-5}
 & 5 &  3.32e-2 &  1.59e-2 & -2.26e-3
\\ \hline \hline
\multirow{3}{*}{160}
 & 1 & 7.59e-4  & 2.01e-4 &  3.63e-5
\\ \cline{2-5}
 &  2 &  1.76e-3 &  6.24e-4 &  4.19e-5
\\ \cline{2-5}
 & 5  &  8.32e-3  & 4.03e-3 & -5.69e-4
\\ \hline \hline
\multirow{3}{*}{320}
 & 1 &   1.79e-4 &   5.01e-5 &   9.12e-6
\\ \cline{2-5}
 & 2 &   4.37e-4  &  1.57e-4  &  1.04e-5
\\ \cline{2-5}
 & 5 &   2.13e-3 &   1.01e-3 &  -1.43e-4 
\\ \hline \hline
\multirow{3}{*}{640}
  &  1 &  4.43e-5  & 1.26e-5 &  2.28e-6
\\ \cline{2-5}
  & 2  & 1.10e-4  & 3.92e-5 &  2.64e-6
\\ \cline{2-5}
  & 5 &  5.34e-4  & 2.53e-4 & -3.59e-5
\\ \hline 
\end{tabular} \end{center}  }
\end{table}

\medskip

   
\noindent {\bf Differential operators on a surface.}
Before proceeding it will be helpful to interpret this example in a setting independent of coordinates and discuss the surface gradient and divergence.  The p.d.e. (4.3) in spherical coordinates is equivalent to 
\beq \frac{\pa\Phi}{\pa t} \p {\bf v}\cdot\nabla\Phi \eq 0  \eeq
where $\nabla\Phi$ is the surface gradient and
$\bf v$ is the tangential vector field 
\beq {\bf v} \eq {\bf X}_\varphi - \cos{\varphi}\cos^2{\theta}{\bf X}_\theta
     \eq (x^2z - y, x + xyz, -xr^2)^T \eeq
Here ${\bf X} = (x,y,z)$ and ${\bf X}_\varphi = \pa{\bf X}/\pa\varphi$ and
${\bf X}_\theta = \pa{\bf X}/\pa\theta$ are tangent vectors.  

In general, if
$(\xi_1,\xi_2)$ are coordinates on a surface, the tangent vectors
${\bf X}_i = \pa{\bf X}/\pa\xi_i$, $i = 1,2$ form a basis of the tangent space at each point on the surface. With metric tensor
$g_{ij} = {\bf X}_i\cdot{\bf X}_j$ and inverse $g^{ij} = (g_{ij})^{-1}$,
we have dual tangent vectors ${\bf X}^*_i = \Sigma_j g^{ij}{\bf X}_j $
so that ${\bf X^*_i}\cdot{\bf X_j} = \delta_{ij}$.  For a scalar function $\Phi$
the surface gradient is 
\beq  \nabla\Phi \eq \frac{\pa\Phi}{\pa\xi_1}{\bf X^*_1}
          \p \frac{\pa\Phi}{\pa\xi_2}{\bf X^*_2} \eeq
Note that ${\bf X}_i\cdot\nabla\Phi = \pa\Phi/\pa\xi_i$.
In the specific case $(\xi_1,\xi_2) = (x,y)$, with $z = z(x,y)$,
we have ${\bf X}_1 = (1,0,z_x)^T$ and ${\bf X}_2 = (0,1,z_y)^T$.
For a tangential vector field ${\bf v} = (v_1,v_2,v_3)^T$ in Cartesian form as above,
we have
\beq {\bf v} = v_1{\bf X}_1 + v_2{\bf X}_2 \eeq
 so that
\beq {\bf v}\cdot\nabla\Phi = v_1\Phi_x + v_2\Phi_y \eeq
 leading to the third form
of the p.d.e. (4.8) above, and the other two are similar. 
For later use we note that, with $z = z(x,y)$, the surface gradient is
\beq \nabla\Phi \eq (\Phi_x,\Phi_y,0)^{tan}  \eeq
where $tan$ denotes the tangential part of the vector,
\beq {\bf w}^{tan} \eq {\bf w} \m ({\bf w}\cdot{\bf n}){\bf n}  \eeq
as can be seen by checking the scalar product with ${\bf X}_i$.

The surface divergence of a tangential vector field in general coordinates
on a surface is
\beq \nabla\cdot{\bf v} \eq \frac{\pa{\bf v}}{\pa\xi_1}\cdot{\bf X^*_1}
          \p \frac{\pa{\bf v}}{\pa\xi_2}\cdot{\bf X^*_2} \eeq
This is equivalent to a standard formula,
e.g. in \cite{aris},
$\nabla\cdot{\bf v} = g^{-1/2}\Sigma_i\pa(g^{1/2}v_i)/\pa\xi_i$
with $v_i$ as in (4.17).
For the unit sphere, again with coordinates $(x,y)$ and $z = z(x,y)$,
a calculation gives
\beq \nabla\cdot{\bf v} \eq v_{1,x} + v_{2,y} + (x/z^2)v_1 + (y/z^2)v_2 \eeq
Corresponding remarks apply
to the other coordinate systems, $(x,y)$ or $(z,x)$.

\medskip


\noindent {\bf The shallow water equations.}
The shallow water equations on a sphere are a model for depth-averaged fluid flow with tangential velocity field $\bf v$ and geopotential
$\Phi$.  They are
\begin{gather}
  \frac{d{\bf v}}{dt} \p f{\bf n}\times{\bf v} \p \nabla\Phi \eq 0\\
  \frac{d\Phi}{dt} \p \Phi\nabla\cdot{\bf v} \eq 0
\end{gather}
where  $d/dt$ is the material or substantial derivative,
$\nabla\cdot{\bf v}$ is the surface divergence, $f$ is the
Coriolis parameter, and ${\bf n}$ is the outward unit normal vector.  
The material derivatives are
\beq \frac{d\Phi}{dt} \eq \frac{\pa\Phi}{\pa t} \p {\bf v}\cdot\nabla\Phi \eeq
where $\nabla\Phi$ is the surface gradient, and
\beq \frac{d{\bf v}}{dt} \eq \frac{\pa{\bf v}}{\pa t} \p \nabla_{\bf v}{\bf v} \eeq
where the second term is the covariant derivative on the surface (e.g. \cite{aris}).

The velocity is tangent to the surface, but we will treat it as a Cartesian vector
${\bf v} = (v_1,v_2,v_3)$.  We will describe the equations for part of the surface with coordinates $(x,y)$ and $z = z(x,y)$.
Since  ${\bf v} =  v_1{\bf X}_1 + v_2{\bf X}_2$ and $\bf v$ is Cartesian, the 
covariant derivative of the velocity is simply (\cite{docarmo}, Sec. 4.4)
\beq \nabla_{\bf v}{\bf v} \eq \{v_1{\bf v}_x + v_2{\bf v}_y \}^{tan} \eeq
Substituting equations (4.18),(4.19),(4.22),(4.27) in (4.23),(4.24) we have
\begin{gather}
\frac{\pa{\bf v}}{\pa t} \p \{v_1{\bf v}_x \p v_2{\bf v}_y \}^{tan} \p
     f{\bf n}\times{\bf v} \p (\Phi_x,\Phi_y,0)^{tan} \eq 0  \\
  \frac{\pa\Phi}{\pa t} \p (v_1\Phi_x + v_2\Phi_y) \p
      \Phi\left(v_{1,x} + v_{2,y} + (x/z^2)v_1 + (y/z^2)v_2\right) \eq 0
\end{gather}
We will write the equations with derivative terms in conservation form before
discretizing.  We can combine terms in the $\Phi$-equation to get
\beq \frac{\pa\Phi}{\pa t} \p (\Phi v_1)_x + (\Phi v_2)_y
       \p     (x/z^2)\Phi v_1 + (y/z^2)\Phi v_2 \eq 0  \eeq 
As usual we replace the ${\bf v}$-equation with one for $\Phi{\bf v}$.
Then $\pa(\Phi{\bf v})/\pa t$ includes terms
\beq \Phi\{v_1{\bf v}_x \p v_2{\bf v}_y \}^{tan} \p 
  (\Phi v_1)_x{\bf v} + (\Phi v_2)_y{\bf v}  \eq 
   \{(\Phi v_1{\bf v})_x + (\Phi v_2{\bf v})_y\}^{tan} \eeq
and we obtain the equation
\begin{multline}
\frac{\pa(\Phi{\bf v})}{\pa t} \p
     \{(\Phi v_1{\bf v})_x + (\Phi v_2{\bf v})_y \}^{tan} \p 
       \frac12((\Phi^2)_x,(\Phi^2)_y,0)^{tan} \\ 
\p f{\bf n}\times(\Phi{\bf v}) \p 
  \left((x/z^2)v_1 + (y/z^2)v_2\right)\Phi{\bf v}   \eq 0  
\end{multline}

We use the formulation (4.30), (4.32) to compute the solution
at cut points in $\Gamma_3$.
With a similar treatment for the other two cases, we can solve the system of equations in a manner like that for the advection equation.  Again we use the
MacCormack version of the Lax-Wendroff method. For stability we use a standard
artificial viscosity, e.g. as in \cite{peyret}.  We add terms which for (4.30) approximate
\beq \nu h^2 \left( \left(|\nabla\Phi|\Phi_x\right)_x +  
\left(|\nabla\Phi|\Phi_y\right)_y \right)  \eeq
and similarly for $\Phi{\bf v}$ in (4.32).
For (4.30), in the corrector step corresponding to (4.12), on $\Gamma_3$, we add
\beq \nu kh \sum_i \left(|D^+\Phi^n|D^+_i\Phi^n - |D^-\Phi^n|D^-_i\Phi^n \right) \eeq
with $i = 1,2$, where $D^\pm_i$ is the forward or backward difference in direction
$x$ or $y$ and $|D^\pm\Phi^n| = (|D^\pm_1\Phi^n|^2 + |D^\pm_2\Phi^n|^2)^{1/2}$.
An analogous term is used for $\Phi{\bf v}$.

As a test problem we use the second example from the well-known test set of 
Williamson et al. \cite{testset}, an exact steady solution of (4.23--24) with a replacement
for $f$.  In rectangular
coordinates the formulas are
\begin{gather}
 {\bf v} \eq u_0 (-c_\alpha y, c_\alpha x + s_\alpha z, - s_\alpha y)^T \\
   \Phi \eq \Phi_0 - (a\Omega u_0 + u_0^2/2)(-s_\alpha x + c_\alpha z)^2  \\
 f \eq 2\Omega(-s_\alpha x + c_\alpha z) \,, \qquad 
c_\alpha = \cos{\alpha}\,, s_\alpha = \sin{\alpha}  
\end{gather}
with parameters $u_0$, $a$, $\Omega$ given in \cite{testset} and $\alpha$ an arbitrary angle.
We chose $\alpha = 30^o$ and $h = 2.4/N$, time step $k = 1/2N$ as before.
We set the viscosity coefficient to $\nu = .5$ or $1$.  Results are shown in
Tables 4.2 and 4.3 after $1$, $2$ and $5$ days.  We display the relative errors in maximum norm and in $L^2$,
defined as in (4.13), and the relative errors in the surface integrals of
$|{\bf v}|^2$ and $\Phi$, computed as in Appendix B.  With $\nu = 1$, the $L^2$ errors
in $\Phi{\bf v}$ and $\Phi$ and the maximum error in $\Phi$ are about $O(h^2)$.
The maximum error for $\Phi{\bf v}$
appears between $O(h)$ and $O(h^2)$ for $N \leq 320$ but not to
$N = 640$.  For $\nu = .5$, the $L^2$ errors are mostly somewhat smaller, but the errors are less regular in dependence on $h$.  The discrepancy between $L^2$ and maximum errors is less for $\nu = 1$ than $\nu = .5$.

\begin{table}[ht]
{\footnotesize
\caption{Relative errors for the shallow water equations, $\nu = 1$} 
\begin{center}
\begin{tabular}{| c | c | c | c | c | c | c | c |} \hline
 N & time & max $\Phi{\bf v}$ & max $\Phi$ & $L^2$ in $\Phi{\bf v}$ 
  & $L^2$ in $\Phi$ & $\int |{\bf v}|^2$ & $\int\Phi$ \\ \hline  \hline
 \multirow{3}{*}{80}
 & 1 & 2.47e-2 &  1.22e-2 &  1.62e-2 &  4.65e-3 & -2.70e-2 &  8.68e-4
\\ \cline{2-8}
 & 2 & 3.78e-2  & 2.23e-2  & 3.05e-2  & 9.02e-3 & -5.18e-2  & 1.69e-3
\\ \cline{2-8}
 & 5 & 8.98e-2 &  4.62e-2 &  7.15e-2 &  2.05e-2 & -1.23e-1 & 3.95e-3
\\ \hline \hline
\multirow{3}{*}{160}
 & 1 & 8.01e-3 &  3.26e-3 &  4.22e-3 &  1.20e-3 & -6.86e-3 &  2.29e-4
\\ \cline{2-8}
 & 2 &  1.23e-2 &  6.18e-3  & 8.03e-3 &  2.38e-3 & -1.34e-2 &  4.54e-4
\\ \cline{2-8}
 & 5 &  2.69e-2 &  1.38e-2 &  1.96e-2 &  5.68e-3 & -3.32e-2 &  1.11e-3
\\ \hline \hline
\multirow{3}{*}{320}
 & 1  & 2.26e-3 &  8.34e-4 &  1.07e-3  & 3.02e-4 & -1.72e-3 &  5.88e-5
\\ \cline{2-8}
 & 2  & 3.43e-3 & 1.60e-3  & 2.04e-3 &  6.04e-4 & -3.36e-3 &  1.17e-4
\\ \cline{2-8}
 & 5 & 7.10e-3 &  3.68e-3 &  5.05e-3 &  1.46e-3 & -8.47e-3 &  2.92e-4
\\ \hline \hline
\multirow{3}{*}{640}
 & 1 & 2.17e-3  & 2.09e-4 &  2.70e-4  & 7.57e-5 & -4.32e-4  & 1.45e-5
\\ \cline{2-8}
 & 2  & 4.37e-3  & 4.01e-4  & 5.29e-4 &  1.53e-4 & -8.63e-4 &  2.46e-5
\\ \cline{2-8}
 & 5  &  4.83e-3 &  9.31e-4  & 1.32e-3 &  3.76e-4 & -2.21e-3  & 5.55e-5
\\ \hline 
\end{tabular} \end{center}  }
\end{table}

\begin{table}[ht]
{\footnotesize
\caption{Relative errors for the shallow water equations, $\nu = .5$} 
\begin{center}  
\begin{tabular}{| c | c | c | c | c | c | c | c |} \hline
N & time & max $\Phi{\bf v}$ & max $\Phi$ & $L^2$ in $\Phi{\bf v}$ 
  & $L^2$ in $\Phi$ & $\int |{\bf v}|^2$ & $\int\Phi$ \\ \hline  \hline
  \multirow{3}{*}{80}
  & 1 & 1.49e-2 &  6.49e-3  & 9.01e-3 &  2.44e-3 & -1.47e-2 &  3.63e-4
\\ \cline{2-8}
  & 2 & 2.27e-2 &  1.22e-2 &  1.68e-2  & 4.85e-3 & -2.80e-2 &  7.13e-4
\\ \cline{2-8}
  & 5 & 5.30e-2 & 2.63e-2 &  4.05e-2 &  1.13e-2 & -6.85e-2 &  1.70e-3
\\ \hline \hline
\multirow{3}{*}{160}
  & 1 & 4.61e-3  & 1.67e-3 &  2.30e-3  & 6.19e-4 & -3.71e-3 &  9.61e-5
\\ \cline{2-8}
  & 2 & 8.46e-3 &  3.19e-3  & 4.34e-3 &  1.25e-3 & -7.12e-3  & 1.86e-4
\\ \cline{2-8}
 & 5 &  1.86e-2 &  7.22e-3 &  1.08e-2 & 3.03e-3 & -1.80e-2  & 4.03e-4
\\ \hline \hline
\multirow{3}{*}{320}
 & 1 & 4.48e-3 &  4.19e-4  & 5.83e-4 &  1.55e-4 & -9.30e-4 &  2.43e-5
\\ \cline{2-8}
 & 2 &  1.31e-2  & 9.78e-4  & 1.13e-3 &  3.15e-4 & -1.78e-3 & 4.81e-5
\\ \cline{2-8}
 & 5 &  1.50e-2 &  1.86e-3 &  2.75e-3 &  7.59e-4 & -4.49e-3 &  1.29e-4
\\ \hline \hline
\multirow{3}{*}{640}
 & 1 & 1.37e-2  & 7.71e-4 &  3.00e-4  & 4.26e-5 & -2.68e-4 & -4.23e-7
\\ \cline{2-8}
 & 2 & 1.39e-2  & 8.18e-4 &  4.48e-4 & 9.05e-5 & -5.24e-4 & -7.60e-6
\\ \cline{2-8}
 & 5 & 1.40e-2  & 9.33e-4 &  9.31e-4  & 2.72e-4 & -1.38e-3 & -2.87e-5
\\ \hline 
\end{tabular} \end{center} }
\end{table}

\ssection {\bf Discussion} 

We have seen that, for a variety of partial differential equations, the present method of surface discretization permits the use of conventional finite difference methods for planar regions.  The formulation of the diffusion equation in Section 3 could be extended to reaction-diffusion equations as e.g. in
\cite{zhao13,closepnas,rbffogel}, combining diffusion equations with nonlinear ordinary differential equations.  We have used quadratic interpolation, but higher order interpolation could be used.  This might be needed for equations with higher order diffusion.  It should be possible to extend
the method to moving surfaces represented by level set functions.

In Section 4 we applied this method to the shallow water equations on a sphere.  The form of the equations in the coordinate systems is fairly straightforward.  We found the Lax-Wendroff method was adequate for the simple test problem considered.  For more realistic problems Riemann solvers might be used.  The semi-Lagrangian method has been successfully used in meteorology; e.g. see \cite{durran}.  In this approach,
to approximate the material derivative, values of the unknowns are obtained at the new time by following a particle path backward in time to find the previous value at the departure point along the path.  The old value must be interpolated from values at grid points.  We expect that such a strategy can be used with the approach presented here.  We would need to find the primary point closest to the departure point and interpolate in a two-dimensional neighborhood.
\appendix

\ssection {\bf
Positivity of the resolvent of the LB operator on a curve}
For a closed curve in $\R^2$ the Laplace-Beltrami operator, acting on a function $u$ on the curve, is
\beq \Delta u \eq c \frac{d}{d\xi}\left(c\frac{du}{d\xi}\right)\,,
         \quad c \eq \left|\frac{d{\bf X}}{d\xi}\right|^{-1} \eq 
           \left(\frac{ds}{d\xi}\right)^{-1}  \eeq
where $\xi$ is a coordinate and ${\bf X} = (x,y)$.  Thus $\Delta$ is the second arclength derivative.  With cut points selected as before in sets $\Gamma_1$, $\Gamma_2$, we can discretize in divergence form as in (3.6),(3.7)
and obtain an expression for the discrete Laplacian at a primary point, e.g. in $\Gamma_2$, with $\xi = x$,
\beq \Delta_h u_i \eq \left( c_{i+1}u_{i+1} + c_{i-1}u_{i-1} - 2c_iu_i\right)/h^2 \,, 
        \quad 2c_i = c_{i+1} + c_{i-1} \eeq
where $c_i$ approximates $1/|{\bf X}_x|^2 = 1/(1 + y_x^2)$ etc.  Assuming the curve is $C^2$, and using Lemma 2.1, we have
\beq c_i \geq 1/2 -  O(h)\,, \quad c_{i\pm 1} - c_i = O(h)  \eeq
We can form the reduced LB operator $\Delta_h^{red}$ as before.  We prove the positivity property described earlier for the resolvent.

\begin{lemma}
For $k/h^2 \geq 1/2 + O(h)$, the matrix
$I - k\Delta_h^{red}$ is invertible,  and the inverse has nonnegative entries.
\end{lemma}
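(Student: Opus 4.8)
The plan is to prove the two assertions together by showing that $B := I - k\Delta_h^{red}$ is \emph{inverse-positive}, i.e.\ that $Bu^p \geq 0$ (componentwise) forces $u^p \geq 0$. Applying this to $\pm u^p$ shows $B$ is injective, hence invertible, and then gives $B^{-1}\geq 0$; since constants are null vectors of $\Delta_h^{red}$ we have $B\mathbf 1 = \mathbf 1$, so $B^{-1}$ will have row sums one and $\|B^{-1}\|_\infty = 1$, as in the five-point case discussed in Section 3. Writing $\mu = k/h^2$ and $\lambda = 1/\mu = h^2/k$, the hypothesis reads $\lambda \leq 2 - O(h)$, and after dividing $Bu^p\geq 0$ by $\mu$ and using the divergence-form relation $2c_i = c_{i+1}+c_{i-1}$ of (A.2) to write $\Delta_h$ as a weighted graph Laplacian, the condition becomes
\[
  (\lambda + 2c_i)\,u^p_i \;\geq\; c_{i+1}\,w(q^+) + c_{i-1}\,w(q^-)\qquad\text{for every primary }p_i,
\]
where $w = E_h u^p$ is the equilibrated extension to all cut points and $q^\pm$ are the two neighbors of $p_i$ in its set $\Gamma_\nu$. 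The entire difficulty is concentrated in the values $w(q^\pm)$, which may lie at secondary points and are then given by the quadratic interpolation (2.1).

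First I would dispose of the easy case. Let $m = \min w$ over \emph{all} cut points, attained at $z^*$. If $z^*$ is a primary point $p_{i_0}$, then $w(q^\pm)\geq m$ at its two neighbors, so the right side of the displayed bound is at least $(c_{i_0+1}+c_{i_0-1})m = 2c_{i_0}m$; combined with the bound itself this gives $(\lambda + 2c_{i_0})m \geq 2c_{i_0}m$, that is $\lambda m \geq 0$. Since $\lambda > 0$, this forces $m \geq 0$ and hence $u^p \geq 0$. This step uses only $\lambda>0$, the positivity $c_i \geq \tfrac12 - O(h)$ from Lemma 2.1, and the divergence structure of the weights.

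The remaining, and genuinely hard, case is when the minimum of $w$ is attained \emph{only} at secondary points. This is exactly the obstruction flagged in the introduction: the equilibration coefficients $\tfrac12(\theta^2\mp\theta)$ in (2.1) are not all nonnegative for $|\theta|\leq\tfrac12$, so the quadratic interpolant can undershoot below every primary value in its stencil, and a neighbor value $w(q^\pm)$ can then dip below the primary minimum. Because of these negative weights $B$ has positive off-diagonal entries, so it is not an M-matrix and no one-line maximum-principle or regular-splitting argument applies; this non-monotonicity is why only the curve case is within reach. To rule the case out I would exploit the one-dimensional geometry special to a curve: ordering the cut points by arclength makes the primary points a cycle and the equilibration stencils strictly local, and at a secondary minimizer $s$ the identity $w(s)=m$ yields $\sum_j \alpha_j e_j = 0$ for the nonnegative excesses $e_j = w(r_j)-m$ of the three primary stencil points $r_j$, with dominant weight $\alpha_0 = 1-\theta^2 \geq \tfrac34$ and a single negative weight of magnitude at most $\tfrac18$. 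The plan is to feed this excess relation, together with the displayed inequality applied at the stencil points $r_j$ and the bounds $c_i\in[\tfrac12 - O(h),\,1]$, $c_{i\pm1}-c_i = O(h)$ of (A.3), into a comparison that the threshold $\lambda\leq 2-O(h)$ closes: a stable undershoot would demand an excess budget incompatible with $\lambda \leq 2$ unless every $e_j$ vanishes, which places a primary point at the global minimum and returns us to the easy case. I expect this propagation along the cycle of primary points to be the main obstacle, since it is precisely the interaction between the non-monotone equilibration and the discrete Laplacian that must be controlled quantitatively.
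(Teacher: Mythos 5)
There is a genuine gap, and you have in fact flagged it yourself: your argument is complete only in the easy case where the minimum of the equilibrated extension $w = E_h u^p$ is attained at a primary point, and that case uses only $\lambda > 0$, not the hypothesis $k/h^2 \geq 1/2 + O(h)$. The entire content of the lemma lives in the hard case, where the quadratic interpolation undershoots and the minimum sits at a secondary point, and there you offer only a plan (``feed this excess relation \dots into a comparison that the threshold closes''), not a derivation; the quantitative step in which the threshold actually enters is never carried out. Worse, the plan as stated rests on a misreading of the discretization: you call $r_j$ ``the three primary stencil points'' of a secondary point, but by the paper's step 7 and the discussion after (2.1) only the center of the interpolation stencil is guaranteed primary --- either or both outer neighbors ${\bf q}^\pm$ may themselves be secondary (this is exactly why the equilibration requires solving the coupled system $(I-\Pi_{ss})u^s = \Pi_{sp}u^p$ rather than direct interpolation). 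Consequently your excess identity $\sum_j \alpha_j e_j = 0$ can involve excesses $e_\pm$ at other secondary points, which may themselves undershoot, so the undershoot can chain through the secondary set and your proposed propagation along the cycle does not terminate after one application; controlling that chain is the whole difficulty, and it remains open in your write-up.

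For contrast, the paper avoids any case analysis on the location of the minimum by working with the augmented $(n_p+n_s)\times(n_p+n_s)$ matrix $A$ whose upper rows are $I - k\Delta_h$ at primary points and whose lower rows are the equilibration identities $u^s - \Pi_{sp}u^p - \Pi_{ss}u^s$. Each secondary row has at most one positive off-diagonal entry, of size at most $1/8$, sitting in a column where the associated primary row of $I - k\Delta_h$ has the entry $-\sigma c_{i\pm 1}$; adding $r_\ell = 1/(8\sigma c_{i+1})$ times that primary row kills the positive entry, and the condition that the $(\ell,i)$ entry stay nonpositive, $(1+2\sigma c_i)/(\sigma c_{i+1}) \leq 6$, is precisely where $\sigma = k/h^2 \geq 1/2 + O(h)$ is used, via (A.3). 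The resulting $PA$ is an M-matrix (row sums $1$ and $r_\ell > 0$ give strict diagonal dominance), so $A^{-1} = (PA)^{-1}P \geq 0$, and solving $A(u^p\;u^s)^T = (y^p\;0)^T$ forces $u^s$ to be equilibrated, whence $(I-k\Delta_h^{red})u^p = y^p$ and the reduced resolvent inherits invertibility and positivity. Note that this mechanism handles the secondary-neighbors-of-secondary-points issue automatically, since the row operations are purely algebraic and never need the stencil values to be primary. If you wish to salvage a maximum-principle proof along your lines, you would need a lemma bounding the total undershoot of the solved equilibration system $(I-\Pi_{ss})^{-1}\Pi_{sp}$ below the primary minimum uniformly along chains of secondary points --- which is essentially the M-matrix computation in disguise.
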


\begin{proof}
  Let $n_p$ and $n_s$ be the number of primary and secondary points, resp.  We will use an 
$(n_p + n_s)\times (n_p + n_s)$ matrix $A$ which incorporates $\Delta_h$ evaluated
at the primary points and the
interpolation (2.1) for secondary points.  The upper part of $A$ is the $n_p \times (n_p + n_s)$ matrix
formed by $I - k\Delta_h$ at the primary points.  Thus the $i$th row, corresponding to the case above
has nonzero entries $a_{i,i} = 1 + 2\sigma c_i$, $a_{i,i\pm 1} = - \sigma c_{i\pm 1}$, with $\sigma = k/h^2$.
The lower part of $A$ consists of the $n_s\times (n_p + n_s)$ matrix taking the column vector
$(u^p \; u^s)^T$ to $u^s - \Pi^{sp}u^p - \Pi^{ss}u^s$ with $\Pi^{sp}$, $\Pi^{ss}$ as in (2.2).  According to (2.1), the $\ell$th row, with $n_p+1 \leq \ell \leq (n_p + n_s)$, has nonzero entries $a_{\ell,\ell} = 1$,
$a_{\ell,i} = -(1-\theta_\ell^2)$, $a_{\ell,i\pm 1} = (\mp\theta_\ell - \theta_\ell^2)/2$ with
$|\theta_\ell| \leq 1/2$, where $i$ is the index for the primary point associated with the secondary point
of index $\ell$.

The matrix $A$ has positive diagonal entries and nonpositive entries off-diagonal, except that one entry
in the $\ell$th row, say $a_{\ell,i+1}$, could be positive, with $0 \leq a_{\ell,i+1} \leq 1/8$.  In this sense, $A$ is close to being an $M$-matrix.  We will modify it by row operations to obtain an $M$-matrix.
Consider rows $\ell$ and $i$ as above.  We add $r_\ell$ times row $i$ to row $\ell$, with 
$r_\ell = 1/(8\sigma c_{i+1})$ so that the new $(\ell,i+1)$ entry is $\leq 0$.  We need to ensure that
the new $(\ell,i)$ entry is also $\leq 0$.  Since the original entry is $\leq - 3/4$, we must require
$r_\ell(1 + 2\sigma c_i) \leq 3/4$ or $(1 + 2\sigma c_i)/(\sigma c_{i+1}) \leq 6$.
In view of (A.3), this is true provided we assume
$   \sigma \geq 1/2 + O(h)  $.  Thus we have eliminated the off-diagonal entry $\geq 0$ in row $\ell$.
This row operation can be accomplished by premultiplying $A$ by the matrix $P^{(\ell)}$ 
where $P^{(\ell)}$ has $1$'s on the diagonal, $r_\ell$ in the $(\ell,i)$ entry and zero otherwise.
Repeating the same procedure for each row with $\ell > n_p$, we can obtain a matrix $P$ with nonnegative
entries so that $PA$ has diagonal entries $\geq 0$ and off-diagonal entries $\leq 0$.  The row sums are
$1$ in the upper part and $r_\ell$ in the $\ell$-th row in the lower part.  Thus $PA$ is strictly 
diagonally dominant and is an $M$-matrix.  Consequently it is invertible, and the inverse has entries $\geq 0$ (e.g. see \cite{hackbook}).

We now have $(PA)^{-1} \geq 0$ entrywise.  To return to $A$, note that
$(PA)^{-1}PA = I$ so that $(PA)^{-1}P = A^{-1}$.  This is a product of matrices
$\geq 0$, so $A^{-1} \geq 0$.  (Cf. \cite{bramble}, Thm. 2.3.)
We can now relate the inverse of $I - k\Delta_h^{red}$ to that of $A$.
Given a vector $y^p$ of length $n_p$, let $(u^p\; u^s)^T = A^{-1}(y^p \; 0)^T$. 
Since the secondary part of $A(u^p\; u^s)^T$ is zero, $u^s$ is equilibrated.
It follows that $(I - k\Delta_h^{red})u^p = (I - k\Delta_h)(u^p\; u^s)^T = y^p$,
and thus $I - k\Delta_h^{red}$ is invertible.  If $y^p \geq 0$, it follows that
$u^p \geq 0$ since $A^{-1} \geq 0$.  We have shown that $y^p \geq 0$ implies
$(I - k\Delta_h^{red})^{-1}y^p \geq 0$, and thus
$ (I - k\Delta_h^{red})^{-1} \geq 0$. 
\end{proof}

\ssection {\bf A quadrature rule for surface integrals}
We describe a quadrature rule for surface integrals, using the sets
$\Gamma_v$ of cut points, introduced in \cite{Wilson} and explained in \cite{byw}.
Quadrature weights are defined from a partition of unity on the unit sphere,
applied to the normal vector to the surface.
Suppose $\Gamma$ is a closed surface and the sets $\Gamma_\nu$, $\nu = 1,2,3$,
are defined as in Sec. 2 with the restriction $|n_\nu| \geq \eta$.  We choose
$\eta$ and an angle $\theta$ so that $\eta < \cos{\theta} < 1/\sqrt{3}$,
e.g., $\eta = .45$ and $\theta = 62.5^o$.  We define a partition of unity
on the unit sphere $S$ using the bump function $b(r) = \exp(r^2/(r^2-1))$ for
$|r| <  1$ and $b(r) = 0$ otherwise.  For ${\bf n} \in S$ define for $\nu = 1,2,3$
\beq \sigma_\nu({\bf n}) \eq b((\cos^{-1}{|n_\nu|})/\theta)\,, \qquad
      \psi_\nu({\bf n}) \eq \sigma_\nu({\bf n})/\left(\sum_{j=1}^3 \sigma_j({\bf n}) \right) \eeq
Then $\Sigma \psi_\nu \equiv 1$ on $S$.  The quadrature rule is
\beq \int_\Gamma f({\bf x})\,dS({\bf x}) \,\approx\, 
     \sum_{\nu=1}^3 \sum_{{\bf x}\in\Gamma_\nu} 
      f({\bf x})\,\psi_\nu({\bf n}({\bf x}))|n_\nu({\bf x})|^{-1}h^2  \eeq
It is high order accurate, i.e., the accuracy is limited only by the smoothness of
the surface $\Gamma$ and the integrand $f$.

\section*{Acknowledgments}  We are grateful to Wenjun Ying for discussions about the surface discretization and to Thomas Witelski for several suggestions.

\bibliographystyle{amsplain}

\begin{thebibliography}{10}

\bibitem{aris} R. Aris, {\em Vectors, Tensors, and the Basic Equations of
Fluid Mechanics}, Dover, New York, 1962.

\bibitem{byw} J. T. Beale, W.-J. Ying, and J. R. Wilson, {\em A simple method for
computing singular or nearly singular integrals on closed surfaces}, 
Commun. Comput. Phys. 20 (2016), 733–-53.

\bibitem{logrect} M. J. Berger, D. A. Calhoun, C. Helzel and R. J. LeVeque,
{\em Logically rectangular finite volume methods with adaptive refinement on the sphere}, Phil. Trans. R. Soc. A 367 (2009), 4483–-4496.

\bibitem{varimpl}  M. Bertalmio, L.-T. Cheng, S. J. Osher and G. Sapiro, {\em Variational problems and partial differential equations on implicit surfaces}, J. Comput. Phys. 174 (2001), 759–-780.

\bibitem{bouchon} F. Bouchon, {\em Monotonicity of some perturbations of irreducibly
diagonally dominant M-matrices}, Numer. Math. 105 (2007), 591–-601.

\bibitem{bramble} J. H. Bramble and B. E. Hubbard, {\em On a finite difference analogue of an elliptic boundary problem which is neither diagonally dominant nor of non-negative type}, J. Math. Phys. 43 (1964), 117–-132.

\bibitem{callevrev} D. A. Calhoun, C. Helzel and R. J. LeVeque,
{\em Logically rectangular grids and finite volume methods
for PDEs in circular and spherical domains}, SIAM Rev. 50 (2008), 723--52.

\bibitem{cpell}
Y.~Chen and C.~B. Macdonald, {\em The closest point method and multigrid
  solvers for elliptic equations on surfaces}, SIAM J. Sci. Comput. 37 (2015),
  A134--A155.

\bibitem{vv}
J.~Chu and R.~Tsai, {\em Volumetric variational principles for a class of
  partial differential equations defined on surfaces and curves}, Res. Math.
  Sci. 5 (2018), Paper No. 19.

\bibitem{docarmo} M. P. DoCarmo, {\em Differential Geometry of Curves and Surfaces},
second ed., Dover, New York, 2016.

\bibitem{durran}, D. R. Durran, {\em Numerical Methods for Fluid
Dynamics With Applications to Geophysics}, Second Ed., Springer, New York, 2010.

\bibitem{dziuk} G. Dziuk and C. M. Elliott, {\em Finite element methods for
surface PDEs}, Acta Numerica 22 (2013), 289–-396.

\bibitem{fornfly}, B. Fornberg and N. Flyer, {\em A Primer on Radial Basis Functions
with Applications to the Geosciences}, SIAM, Philadelphia, 2015.

\bibitem{hesthaven} F. X. Giraldo, J. S. Hesthaven, and T. Warburton, 
{\em Nodal high-order discontinuous Galerkin methods for the spherical shallow water equations}, 
J. Comput. Phys. 181 (2002), 499–-525.

\bibitem{hackbook} W. Hackbusch,
{\em Elliptic Differential Equations: Theory and Numerical Treatment}, second ed.,
Springer, Berlin, 2017. 

\bibitem{llz} S.-Y. Leung, J. Lowengrub, and H.-K. Zhao, {\em A grid based particle method for solving partial differential equations on evolving surfaces and modeling high order geometrical motion}, J. Comput. Phys. 230 (2011), 2540–-61.

\bibitem{zhao13} J. Liang and H.-K. Zhao, {\em Solving partial differential equations on point clouds}, SIAM J. Sci. Comput. 35 (2013), A1461–-86.
 
\bibitem{closeimpl}
C.~B. Macdonald and S.~J. Ruuth, {\em The implicit closest point method
  for the numerical solution of partial differential equations on surfaces},
  SIAM J. Sci. Comput. 31 (2009), 4330--4350.

\bibitem{closepnas} C. B. Macdonald, B. Merriman and S. J. Ruuth, {\em Simple computation of reaction-diffusion processes on point clouds}, Proc. Natl. Acad. Sci. USA 110 (2013), 9209–-14. 

\bibitem{closemove} A. Petras and S. J. Ruuth, {\em PDEs on moving surfaces via the closest point method and a modified grid based particle method},
J. Comput. Phys. 312 (2016), 139--56.

\bibitem{peyret} R. Peyret and T. D. Taylor, {\em Computational Methods for Fluid Flow}, Springer, New York, 1983.          

\bibitem{close08} S. J. Ruuth and B. Merriman, {\em A simple embedding method for solving partial differential equations on surfaces},
J. Comput. Phys. 227 (2008), 1943–-61.


\bibitem{pnas05} P. Schwartz, D. Adalsteinsson, P. Colella, A. P. Arkin,
and M. Onsum, {\em Numerical computation of diffusion on a surface},
Proc. Natl. Acad. Sci. USA 102 (2005), 11151–-56.

\bibitem{rbffogel} V. Shankar, G. B. Wright, R. M. Kirby and A. L. Fogelson,
{\em A radial basis function (RBF)-finite difference (FD) method for diffusion and reaction-diffusion equations on surfaces}, J. Sci. Comput. 63 (2016), 745--768.

\bibitem{staniforth} A. Staniforth and J. Thuburn, {\em
Horizontal grids for global weather and climate prediction models: a review}, 
Q. J. R. Meteorol. Soc. 138 (2012), 1--26.

\bibitem{cartsph} P. N. Swarztrauber, D. L. Williamson, and J. B. Drake,
{\em The Cartesian method for solving partial differential equations in spherical geometry}, Dyn. Atmos. Oceans 27 (1997), 679–-706.

\bibitem{ullrich} P. A. Ullrich, C. Jablonowski and B. van Leer, {\em High-order finite-volume methods for the shallow-water equations on the sphere}, J. Comput. Phys. 229 (2010), 6104-34.

\bibitem {zhao18} M. Wang, S.-Y. Leung and H.-K. Zhao, {\em Modified virtual grid difference for
discretizing the Laplace-Beltrami operator on point clouds}, 
SIAM J. Sci. Comput. 40 (2018), A1--A21.

\bibitem{testset} D. L. Williamson, J. B. Drake, J. J. Hack, R. Jakob and P. N. Swarztrauber, {\em A standard test set for numerical approximations to the shallow water equations in spherical geometry}, J. Comput. Phys. 102 (1992), 211--24.

\bibitem{wmrev} D. L. Williamson, {\em The evolution of dynamical cores for global atmospheric models},
J. Meteorol. Soc. Jpn., 85B (2007), 241--269.

\bibitem{Wilson}
J.~R. Wilson, {\em On computing smooth, singular and nearly singular integrals on
  implicitly defined surfaces}, Ph.D. thesis, Duke University (2010), \newline
  {\tt http://search.proquest.com/docview/744476497}

\bibitem{YW13} W.-J. Ying and W.-C. Wang,
{\em A kernel-free boundary integral method for implicitly defined surfaces}, 
J. Comput. Phys. 252 (2013), 606–-624.

\end{thebibliography}

\end{document}